\def\ps@pprintTitle{%
 \let\@oddhead\@empty
 \let\@evenhead\@empty
 \def\@oddfoot{\centerline{\thepage}}%
 \let\@evenfoot\@oddfoot}
\newtheorem{theorem}{Theorem}
\newtheorem*{thma}{Theorem A}
\newtheorem*{thmb}{Theorem B}
\newtheorem*{propa}{Proposition A}
\newtheorem{definition}{Definition}
\newtheorem{lemma}{Lemma}
\newtheorem*{remark}{Remark}
\newcommand*\xbar[1]{%
  \hbox{%
    \vbox{%
      \hrule height 0.5pt 
      \kern0.4ex
      \hbox{%
        \kern-0.15em
        \ensuremath{#1}%
        \kern-0.15em
      }%
    }%
  }%
}
\begin{document}
\begin{frontmatter}

\title{On the volume of the shrinking branching Brownian sausage}

\author{Mehmet \"{O}z}
\ead{mehmet.oz@ozyegin.edu.tr}
\ead[url]{https://faculty.ozyegin.edu.tr/mehmetoz/}

\address{Department of Natural and Mathematical Sciences, Faculty of Engineering, \"{O}zye\u{g}in University, Istanbul, Turkey}

\begin{abstract}
The branching Brownian sausage in $\mathbb{R}^d$ was defined by Engl\"ander in [Stoch.\ Proc.\ Appl.\ 88 (2000)] similarly to the classical Wiener sausage, as the random subset of $\mathbb{R}^d$ scooped out by moving balls of fixed radius with centers following the trajectories of the particles of a branching Brownian motion (BBM). We consider a $d$-dimensional dyadic BBM, and study the large-time asymptotic behavior of the volume of the associated exponentially shrinking branching Brownian sausage (BBM-sausage). Using a previous result on the density of the support of BBM, and some well-known results on the classical Wiener sausage and Brownian hitting probabilities, we obtain almost sure limit theorems as time tends to infinity on the volume of the shrinking BBM-sausage in all dimensions. 
\end{abstract}

\vspace{3mm}

\begin{keyword}
Branching Brownian motion \sep density \sep sausage \sep strong law of large numbers
\vspace{3mm}
\MSC[2010] 60J80 \sep 60F15 \sep 60D05 \sep 92D25
\end{keyword}

\end{frontmatter}

\pagestyle{myheadings}
\markright{Volume of BBM sausage\hfill}

\section{Introduction}\label{intro}

\subsection {Formulation of the problem and background}

Let $X=(X(t))_{t\geq 0}$ be a standard $d$-dimensional Brownian motion starting at the origin. The Wiener sausage of radius $r$ associated to $X$ is the set-valued process defined by 
$$X^r(t)=\bigcup_{0\leq s\leq t}B(X(s),r),$$
where $B(x,r)$ is the closed ball of radius $r>0$ centered at $x\in\mathbb{R}^d$. For each $t\geq 0$, $X^r(t)$ is then a random subset of $\mathbb{R}^d$, which looks like a `sausage' scooped out over the period $[0,t]$ by a moving ball of fixed radius centered at the Brownian trajectory. Note that the Wiener sausage is a non-Markovian functional of $X$.  

Now let $Z=(Z(t))_{t\geq 0}$ be a $d$-dimensional strictly dyadic branching Brownian motion (BBM). The process can be described as follows. It starts with a single particle at the origin, which performs a Brownian motion in $\mathbb{R}^d$ for a random lifetime, at the end of which it dies and simultaneously gives birth to two offspring. Similarly, starting from the position where their parent dies, each offspring particle repeats the same procedure as their parent independently of each other and of the parent, and the process evolves through time in this way. All particle lifetimes are exponentially distributed with constant parameter $\beta>0$, which is called the branching rate. For each $t\geq 0$, $Z(t)$ can be viewed as a finite discrete measure on $\mathbb{R}^d$, which is supported at the particle positions at that time. We use $P$ and $E$, respectively, as the probability and corresponding expectation for $Z$. The \emph{range} (accumulated support) of $Z$ is the process defined by
$$R(t)=\bigcup_{0\leq s\leq t} \text{supp}(Z(s)),$$
and the \emph{branching Brownian sausage (BBM-sausage) with radius $r$} associated to $Z$ is the process defined by
\begin{equation}
\widehat{Z}_t^r:=\bigcup_{x\in R(t)}B(x,r). \nonumber
\end{equation}  

The Wiener sausage and various set functions of it, especially its volume, have been frequently studied going back to \cite{S1964}. In \cite{DV1975}, Donsker and Varadhan obtained an asymptotic result on the Laplace transform of the volume of the Wiener sausage, which is a large-deviation (LD) result giving information on the probability that the volume is aytpically small. In \cite{B1990} and \cite{S1990}, the work in \cite{DV1975} was extended to the case of the so-called shrinking Wiener sausage. We refer the reader to \cite[Sect.\ 1]{H2010} and the references therein for a brief survey of limit theorems on the volume of the Wiener sausage. 

The branching Brownian sausage was introduced by Engl\"ander in \cite{E2000} in analogy with the classical Wiener sausage, and an asymptotic result on the Laplace transform of its volume was obtained similar to the one in \cite{DV1975}, by using an equivalence to a trapping problem of BBM among Poissonian traps. In more detail, consider a Poisson point process $\Pi$ on $\mathbb{R}^d$ with intensity measure $\nu$, and for $r>0$ define the random trap field as
$$K:=\bigcup_{x_i\,\in\,\text{supp}\Pi}B(x_i,r).$$   
Define the first trapping time of the BBM as $T:=\inf\left\{t\geq 0:R(t)\cap K\neq\emptyset\right\}$, and the event of survival of BBM from traps up to time $t$ as $\mathsf{S_t}:=\left\{T>t\right\}$. Then, denoting the annealed law of the traps and the BBM as $\mathbb{P}$, the first trapping problem of BBM among a Poissonian field of traps in $\mathbb{R}^d$ is related to the BBM-sausage by Fubini's theorem:
\begin{equation}
\mathbb{P}(\mathsf{S_t})=E\left[e^{-\nu\left(\widehat{Z}^r_t\right)}\right]. \nonumber
\end{equation}
Note that $\nu(\widehat{Z}^r_t)$ is proportional to the volume of $\widehat{Z}^r_t$ if the trap intensity is uniform. To the best of our knowledge, apart from \cite{E2000}, no further work was done on the volume of the BBM-sausage.

\subsection{Motivation}

The current work can be regarded as a sequel to the recent works \cite{M2018} and \cite{M2019} under the common theme of \emph{spatial distribution of mass in BBM.} In \cite{M2018}, the mass of BBM falling in linearly moving balls of fixed radius was studied, and an LD result on the large-time probability that this mass is atypically small on an exponential scale was obtained. The asymptotics of the probability of absence of BBM in linearly moving balls of fixed radius, emerged as a special case \cite[Corollary 2]{M2018}. It is well-known that the total mass of BBM typically grows exponentially in time. In \cite{M2019}, the following was asked: how homogeneously are the exponentially many particles at time $t$ spread out over a \emph{subcritical ball}? (Please see \cite[Def.\ 1]{M2019}.) This homogeneity question was formulated in terms of the degree of density of support of BBM at time $t$. Firstly, \cite[Corollary 2]{M2018} was extended to the case of the mass falling in linearly moving balls of exponentially shrinking radius $r(t)=r_0e^{-kt}$, and then via a covering by sufficiently many of such balls, an LD result concerning the $r(t)$-density of the support of BBM in subcritical balls was obtained. The concept of $r(t)$-density of the support of BBM naturally led to the following definition.
\begin{definition}[Enlargement of BBM]
Let $Z=(Z(t))_{t\geq 0}$ be a BBM. For $t\geq 0$, we define the \emph{$r$-enlargement} of $Z(t)$ as
$$Z_t^r:=\bigcup_{x\,\in\,\emph{supp}(Z(t))}B(x,r).$$
\end{definition}
In \cite{M2019}, furthermore, the following results were obtained on the large-time behavior of $r(t)$-enlargement of BBM in $\mathbb{R}^d$. Theorem A below says that, with probability one, an $r(t)$-enlargement of BBM with an exponentially decaying $r$ covers corresponding subcritical balls eventually provided that the exponential decay rate is not too big. Theorem B is on the large-time behavior of the volume of $r(t)$-enlargement of BBM. 
\begin{thma}[Almost sure density of BBM; \cite{M2019}] 
Let $0<\theta<1$, $0\leq k<(1-\theta^2)/d$, $r_0>0$ and $r:\mathbb{R}_+\rightarrow\mathbb{R}_+$ be defined by $r(t)=r_0 e^{-\beta kt}$. For $t>0$ define $\rho_t:=\theta\sqrt{2\beta}t$. Then, in any $d\geq 1$,
$$P(\Omega_0)=1,\:\:\text{where}\quad\Omega_0:=\left\{\omega: \exists \: t_0=t_0(\omega) \: \text{such that} \:\: \forall \: t\geq t_0, \: B(0,\rho_t)\subseteq Z^{r_t}_t(\omega)\right\}.$$
\end{thma}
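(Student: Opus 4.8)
\medskip
\noindent\textbf{Proof proposal.}

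The plan is to deduce Theorem A from a single quantitative estimate: that a fixed exponentially shrinking ball placed near the critical front carries no BBM particle at time $t$ only with doubly-exponentially small probability. Writing $\rho_t=\theta\sqrt{2\beta}\,t$ and $r_t=r_0e^{-\beta kt}$, the key estimate is that for $0<\theta<1$ and $0\le k<(1-\theta^2)/d$ there is $c>0$ with
$$\sup_{|y|\le\rho_t}P\Bigl(Z(t)\bigl(B(y,\tfrac14 r_t)\bigr)=0\Bigr)\le \exp\bigl(-e^{ct}\bigr)\qquad\text{for all large }t,$$
and the same with any fixed positive constant in place of $\tfrac14$. This is the exponentially shrinking-radius counterpart of \cite[Corollary 2]{M2018}. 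The hypothesis $k<(1-\theta^2)/d$ enters as the threshold below which the expected number of particles in the ball still explodes: by the many-to-one formula, $E[Z(t)(B(y,\tfrac14 r_t))]=e^{\beta t}\,\mathbb P(W_t\in B(y,\tfrac14 r_t))\asymp t^{-d/2}e^{\beta t(1-\theta^2-dk)}$ for $|y|\le\rho_t$, where $W$ is a Brownian motion from the origin.

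Granting the key estimate, Theorem A follows by routine covering and Borel--Cantelli arguments. For $n\in\mathbb N$, cover $B(0,\rho_{n+1})$ by balls $B(y^{(n)}_i,\tfrac14 r_{n+1})$, $1\le i\le N_n$, with $N_n\le C(\rho_{n+1}/r_{n+1})^d\le C'n^de^{d\beta kn}$; if every such ball meets $\mathrm{supp}(Z(s))$ then $B(0,\rho_{n+1})\subseteq Z_s^{\,r_{n+1}/2}$. Discretise $[n,n+1]$ into intervals of length $h_n=e^{-4\beta n}$; applying the key estimate at each mesh point (it is insensitive to constant factors in the radius) and adding the event that, for some particle $u$ alive at a mesh point $s$, the ancestral path from $u$ to one of its descendants alive at time $s+h_n$ leaves $B(u,\tfrac14 r_{n+1})$ --- a Gaussian estimate (via many-to-one), combined with the exponential tail of $|Z(\cdot)|$, makes this probability summable, the numbers of particles and mesh points being only $e^{O(n)}$ --- one gets that $P(\exists\,t\in[n,n+1]:B(0,\rho_{n+1})\not\subseteq Z_t^{\,r_{n+1}})$ is summable in $n$. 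Since $B(0,\rho_t)\subseteq B(0,\rho_{n+1})$ and $r_t\ge r_{n+1}$ for $t\in[n,n+1]$, Borel--Cantelli gives $P(\Omega_0)=1$.

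The heart of the matter is the key estimate, and I expect it to be the main obstacle. The mechanism is that branching supplies many almost independent ``attempts'' to deposit a particle in $B(y,\tfrac14 r_t)$: conditioning at an intermediate time $t_1=(1-\lambda)t$, one uses the fixed-radius density of BBM (the $k=0$ instance, from \cite[Corollary 2]{M2018}) applied slightly before $t_1$ followed by a brief proliferation phase to see that, with probability at least $1-\exp(-e^{c_1 t})$, there are at least $e^{\beta\gamma t}$ particles of $Z(t_1)$, for some small $\gamma>0$, at distance $\theta\sqrt{2\beta}\,\lambda t\,(1+o(1))$ from $y$ and roughly along the segment $[0,y]$; conditionally on the past these found independent BBMs over $[t_1,t]$, and since the required speed $\theta\sqrt{2\beta}$ is strictly subcritical, a first- and second-moment computation --- whose mean explodes precisely when $\lambda(1-\theta^2)>dk$, which is solvable with $\lambda<1$ because $k<(1-\theta^2)/d$ --- shows that each such BBM lands a particle in $B(y,\tfrac14 r_t)$ with probability at least $e^{-c_2 t}$, whence $P(Z(t)(B(y,\tfrac14 r_t))=0)\le\exp(-e^{\beta\gamma t-c_2 t})$. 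The delicate part --- and the reason a single use of the second moment will not suffice --- is ensuring $\beta\gamma>c_2$ throughout the whole admissible region and not just in its interior: this forces one to iterate the conditioning over a bounded number of scales (more being needed the closer $(\theta,k)$ is to the boundary), carefully tracking the parameters of the nested subproblems and the correlations between near-front particles, just as in the large-deviation analysis of \cite{M2018}. The remaining ingredients --- the covering count, the discretisation, the modulus of continuity --- are all of size $e^{O(n)}$ and hence negligible against the doubly-exponential decay; they are routine.
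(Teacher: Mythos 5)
This paper does not actually prove Theorem A; it is imported verbatim from \cite{M2019}, whose strategy (as summarized in Section 1.2) is the same architecture you propose: a quantitative bound on the probability that a single exponentially shrinking ball in the subcritical zone is missed by $\mathrm{supp}(Z(t))$, followed by a covering of $B(0,\rho_t)$ by $e^{d\beta kt+o(t)}$ such balls and Borel--Cantelli. So your outer skeleton is the right one.

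The genuine gap is your key estimate: the claim $\sup_{|y|\le\rho_t}P\bigl(Z(t)(B(y,\tfrac14 r_t))=0\bigr)\le\exp(-e^{ct})$ is false. On the event that the initial particle has not branched by time $t$ (probability exactly $e^{-\beta t}$) there is a single Brownian particle at time $t$, which misses the exponentially small, linearly distant ball with probability $1-o(1)$; hence the absence probability is at least $e^{-\beta t}(1-o(1))$, i.e.\ only singly exponentially small. This is consistent with \cite[Corollary 2]{M2018}, which you cite and which already gives the \emph{fixed}-radius absence probability as $\exp[-\beta I t+o(t)]$ with $I$ finite; since $B(y,\tfrac14 r_t)\subseteq B(y,r_0)$, the shrinking-ball absence probability can only be larger. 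The same error propagates into your sketch of the proof of the key estimate: the event that $Z(t_1)$ carries at least $e^{\beta\gamma t}$ particles anywhere fails with probability at least $e^{-\beta t_1}$, so it cannot hold with probability $1-\exp(-e^{c_1t})$. Doubly exponential bounds are available only \emph{conditionally} on an already-populated configuration (this is exactly how the present paper uses them in \eqref{eq46} and \eqref{eq62}); unconditionally one always pays an exponential price for the populating event. The correct route is the one in \cite{M2019}: extend \cite[Thm.\ 1]{M2018} to shrinking balls to get an absence probability $\exp[-\beta I(\theta,k)t+o(t)]$, and check that the hypothesis $k<(1-\theta^2)/d$ forces $I(\theta,k)>dk$, so that the union bound over the $e^{d\beta kt+o(t)}$ covering balls is still summable. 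Your time discretisation also needs repair once the bound is only exponential: a mesh of $e^{4\beta n}$ points per unit interval would require $I>dk+4$, which fails in general, so the passage from integer times to all $t\in[n,n+1]$ must be done with a much coarser mesh (or a continuity/monotonicity argument) whose entropy cost is $o(1)$ on the exponential scale.
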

Throughout the manuscript, for a Borel set $A\subseteq \mathbb{R}^d$, we say \emph{volume} of $A$ to refer to its Lebesgue measure, which we denote by $\textsf{vol}(A)$, and we use $\omega_d$ to denote the volume of the $d$-dimensional unit ball. 
\begin{thmb}[Almost sure volume of enlargement of BBM; \cite{M2019}]
Let $0\leq k\leq 1/d$, $r_0>0$ and $r:\mathbb{R}_+\rightarrow\mathbb{R}_+$ be defined by $r(t)=r_0 e^{-\beta kt}$. Then, with probability one,
\begin{equation}\underset{t\rightarrow\infty}{\lim}\frac{\textsf{vol}\left(Z^{r_t}_t\right)}{t^d}=[2\beta(1-kd)]^{d/2}\omega_d. \nonumber
\end{equation}
\end{thmb}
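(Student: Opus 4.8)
Throughout, write $c_d:=[2\beta(1-kd)]^{d/2}\omega_d$ and let $P_0$ denote the law of a standard $d$-dimensional Brownian motion $B=(B_s)_{s\ge0}$ with $B_0=0$. The plan is to show that, $P$-a.s., one has $\liminf_{t\to\infty}\textsf{vol}(Z_t^{r_t})/t^d\ge c_d$ and $\limsup_{t\to\infty}\textsf{vol}(Z_t^{r_t})/t^d\le c_d$. Suppose first that $0\le k<1/d$, so $\sqrt{1-kd}\in(0,1]$. For the lower bound, fix $\theta\in(0,\sqrt{1-kd})$; then $0<\theta<1$ and $k<(1-\theta^2)/d$, so Theorem A applies (with this $\theta$ and the given $k,r_0$) and gives, $P$-a.s., that $B(0,\theta\sqrt{2\beta}\,t)\subseteq Z_t^{r_t}$ for all large $t$, whence $\textsf{vol}(Z_t^{r_t})\ge\omega_d(\theta\sqrt{2\beta})^dt^d$ eventually and $\liminf_t\textsf{vol}(Z_t^{r_t})/t^d\ge(2\beta)^{d/2}\theta^d\omega_d$. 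Applying this along a sequence $\theta_m\uparrow\sqrt{1-kd}$ and discarding the countable union of exceptional null sets yields $\liminf_t\textsf{vol}(Z_t^{r_t})/t^d\ge(2\beta)^{d/2}(1-kd)^{d/2}\omega_d=c_d$ a.s.

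For the upper bound, fix $\varepsilon>0$ and set $R_t:=(1+\varepsilon)\sqrt{2\beta(1-kd)}\,t$. Since $\textsf{vol}(B(0,R_t))=\omega_dR_t^d=(1+\varepsilon)^dc_d\,t^d$, it suffices to prove $\textsf{vol}(Z_t^{r_t}\setminus B(0,R_t))\to0$ a.s. A point of $Z_t^{r_t}$ lying outside $B(0,R_t)$ lies within $r_t$ of some particle outside $B(0,R_t-r_t)$, so
\[
\textsf{vol}\big(Z_t^{r_t}\setminus B(0,R_t)\big)\le\omega_dr_t^d\cdot\#\{x\in\text{supp}(Z(t)):|x|>R_t-r_t\},
\]
and by the many-to-one lemma $E\big[\#\{x\in\text{supp}(Z(t)):|x|>R_t-r_t\}\big]=e^{\beta t}\,P_0(|B_t|>R_t-r_t)$. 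The crucial quantitative input is the \emph{sharp} Gaussian tail bound $P_0(|B_t|>at)\le C\,t^{(d-2)/2}e^{-a^2t/2}$ (for $t$ large), together with its running-maximum counterpart $P_0(\sup_{s\le t}|B_s|>at)\le C_t\,e^{-a^2t(1-o(1))/2}$ for subexponential $C_t$; the latter follows by covering the unit sphere by a $(1/\log t)$-net of directions $v_j$, noting $\{\sup_{s\le t}|B_s|>at\}\subseteq\bigcup_j\{\sup_{s\le t}\langle v_j,B_s\rangle>at(1-o(1))\}$, and estimating each one-dimensional running maximum via the reflection principle (the polynomially many directions do not affect the exponential rate). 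Since $r_t^d=r_0^de^{-\beta kdt}$ and $R_t-r_t=(1+\varepsilon)\sqrt{2\beta(1-kd)}\,t-r_0e^{-\beta kt}$, the first-moment bound above gives
\[
E\big[\textsf{vol}(Z_t^{r_t}\setminus B(0,R_t))\big]\le C'\,t^{(d-2)/2}\,e^{\beta t[\,1-kd-(1+\varepsilon)^2(1-kd)\,]}=C'\,t^{(d-2)/2}\,e^{-\beta(1-kd)(2\varepsilon+\varepsilon^2)t}.
\]

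This bound is summable over $t=n\in\{1,2,\dots\}$, so by Markov's inequality and the Borel--Cantelli lemma, $\textsf{vol}(Z_n^{r_n}\setminus B(0,R_n))\to0$ a.s. To pass to all $t$, observe that for $t\in[n,n+1]$ one has $r_t\le r_n$ and $R_t\ge R_n$, and — crucially using that dyadic BBM never goes extinct — every particle $x$ alive at time $t$ with $|x|>R_n-r_n$ has a descendant alive at time $n+1$ whose ancestral path leaves $B(0,R_n-r_n)$ during $[n,n+1]$; distinct particles alive at time $t$ have disjoint descendant sets at time $n+1$, so
\[
\sup_{t\in[n,n+1]}\textsf{vol}\big(Z_t^{r_t}\setminus B(0,R_t)\big)\le\omega_dr_n^d\cdot\Xi_n,\quad\Xi_n:=\#\{x\in\text{supp}(Z(n+1)):\text{ancestral path of }x\text{ leaves }B(0,R_n-r_n)\text{ during }[n,n+1]\}.
\]
By many-to-one, $E[\Xi_n]=e^{\beta(n+1)}P_0(\sup_{s\le n+1}|B_s|>R_n-r_n)$, so the same computation shows $E[\omega_dr_n^d\Xi_n]$ is summable in $n$ (the factor $(1-o(1))$ in the exponent is harmless, since $(1+\varepsilon)^2-1>0$); Borel--Cantelli then yields $\textsf{vol}(Z_t^{r_t}\setminus B(0,R_t))\to0$ a.s. Hence $\limsup_t\textsf{vol}(Z_t^{r_t})/t^d\le(1+\varepsilon)^dc_d$ a.s., and letting $\varepsilon\downarrow0$ along a countable sequence completes the case $k<1/d$. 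Finally, when $k=1/d$ we have $c_d=0$, the liminf bound is trivial, and $\textsf{vol}(Z_t^{r_t})\le\#\text{supp}(Z(t))\cdot\omega_dr_t^d=\big(e^{-\beta t}\#\text{supp}(Z(t))\big)\,\omega_dr_0^d$, which is a.s.\ bounded because $e^{-\beta t}\#\text{supp}(Z(t))$ converges a.s.\ to a finite limit; thus $\textsf{vol}(Z_t^{r_t})=O(1)=o(t^d)$.

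The substantive step is the upper bound, and within it the passage from the first-moment estimate to the almost sure statement. Two points deserve attention. First, one must use the \emph{exact} exponential rate $a^2/2$ in the Brownian tail and running-maximum estimates: the decisive exponent $1-kd-(1+\varepsilon)^2(1-kd)=-(1-kd)(2\varepsilon+\varepsilon^2)$ is negative only by the narrow margin created by $\varepsilon$, which a lossy rate such as the $a^2/(2d)$ coming from a naive coordinatewise union bound would swamp once $d\ge2$. Second, upgrading from the times $t=n$ to all $t$ is not automatic, since $t\mapsto\textsf{vol}(Z_t^{r_t})$ is not monotone; the stopping-line/descendant comparison used above is precisely what makes this work, and it relies on the dyadic (nonextinction) structure of the branching. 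Everything else is a routine combination of Theorem A, the many-to-one lemma, Gaussian estimates, and the Borel--Cantelli lemma.
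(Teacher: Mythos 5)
Your argument is correct, but note that the paper does not actually prove Theorem B: it is imported wholesale from \cite{M2019} (as is Theorem A), so there is no in-paper proof to match. What you have written is a legitimate self-contained derivation of Theorem B \emph{from} Theorem A, and its architecture closely mirrors what the paper does for the sausage analogues: your lower bound (cover $B(0,\theta\sqrt{2\beta}t)$ for $\theta\uparrow\sqrt{1-kd}$ via the a.s.\ density theorem) is exactly how the paper obtains \eqref{eq51}, and your upper bound (first moment of the mass outside a radius-$(1+\varepsilon)\sqrt{2\beta(1-kd)}\,t$ ball via many-to-one plus the sharp rate $e^{-\gamma^2 t/2+o(t)}$ of Proposition A, then Markov and Borel--Cantelli) is the same mechanism the paper uses for \eqref{eq40} and \eqref{eq402}. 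The individual steps all check out: the choice $\theta<\sqrt{1-kd}$ satisfies the hypothesis $k<(1-\theta^2)/d$ of Theorem A; the exponent $1-kd-(1+\varepsilon)^2(1-kd)=-(1-kd)(2\varepsilon+\varepsilon^2)$ is strictly negative for $k<1/d$; the boundary case $k=1/d$ is correctly handled by the trivial bound $N_t\,\omega_d r_t^d$ and the a.s.\ convergence of $e^{-\beta t}N_t$. Your two ``points deserving attention'' are well taken and well handled: the sharp Gaussian rate is indeed indispensable (a coordinatewise union bound would destroy the $\varepsilon$-margin), and the interpolation from integer times to all $t$ via the injective map from time-$t$ particles outside $B(0,R_n-r_n)$ to their time-$(n+1)$ descendants is a clean way to handle the non-monotonicity of $t\mapsto\textsf{vol}(Z_t^{r_t})$ (your ``$=$'' for $E[\Xi_n]$ should strictly be ``$\le$'', which is all you use). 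The one thing you could have simplified: the running-maximum estimate you re-derive by a net of directions is precisely Proposition A of the paper, quoted from \cite{OCE2017}, so you could cite it rather than sketch it.
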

Motivated by the results above, we ask the following question in the present work. For large $t$, by how much on the scale of $t^d$, if at all, is the volume of the BBM-sausage with radius $r(t)$ (i.e., the $r(t)$-shrinking BBM-sausage) larger than that of the $r(t)$-enlargement of BBM? The aim here is to answer this question in a precise way as $t\rightarrow\infty$.  
 
\smallskip

{ \bf Notation:} We introduce further notation for the rest of the manuscript. For $x\in\mathbb{R}^d$, we use $|x|$ to denote its Euclidean norm. We use $c, c_0, c_1,\ldots$ as generic positive constants, whose values may change from line to line. If we wish to emphasize the dependence of $c$ on a parameter $p$, then we write $c_p$ or $c(p)$. We use $\mathbb{R}_+$ to denote the set of nonnegative real numbers, and write $o(f(t))$ to refer to $g(t)$, where $g:\mathbb{R}_+\to\mathbb{R}_+$ is a generic function satisfying $g(t)/f(t)\rightarrow 0$ as $t\rightarrow\infty$, unless otherwise stated. Also, for a function $g:\mathbb{R}_+\to\mathbb{R}_+$, we use $g_t=g(t)$ for notational convenience. We denote by $X=(X(t))_{t\geq 0}$ a generic standard Brownian motion in $d$-dimensions, and use $\mathbf{P}_x$ and $\mathbf{E}_x$, respectively, as the law of $X$ started at position $x\in\mathbb{R}^d$, and the corresponding expectation. 

\smallskip

{ \bf Outline:} The rest of the paper is organized as follows. In Section 2, we present our main results. In Section 3, we develop the preparation needed for the proofs of our main results, and then give the heuristic argument behind them. The proofs of the main results are given in Section 4.

\section{Results}

Theorem~\ref{theorem1} and Theorem~\ref{theorem2} are on the almost sure growth of exponentially shrinking BBM-sausages in $d=2$ and $d\geq 3$, respectively. 

Let $k\geq 0$, $r_0>0$, and $r:\mathbb{R}_+\rightarrow\mathbb{R}_+$ be defined by $r(t)=r_0 e^{-\beta kt}$.
\begin{theorem}\label{theorem1} 
In $d=2$, with probability one,
\begin{equation} 
\underset{t\rightarrow\infty}{\lim}\frac{\textsf{vol}\left(\widehat{Z}^{r_t}_t\right)}{t^2}= 2\pi \beta. \nonumber
\end{equation}
\end{theorem}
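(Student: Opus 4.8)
The plan is to prove the two inequalities $\limsup_{t\to\infty}\textsf{vol}(\widehat{Z}^{r_t}_t)/t^2\le 2\pi\beta$ and $\liminf_{t\to\infty}\textsf{vol}(\widehat{Z}^{r_t}_t)/t^2\ge 2\pi\beta$ separately. In each case I would first obtain the bound along the integer times $t=n$ by a Borel--Cantelli argument, and then pass to all $t$ using that $R(s)\subseteq R(t)$ for $s\le t$ and that $r_t/r_{t+1}$ is bounded, so that moving $t$ within a unit interval perturbs the volume only by $o(t^2)$. The geometric content of the constant $2\pi\beta=\pi(\sqrt{2\beta})^2$ is that $\widehat{Z}^{r_t}_t$ equals, up to $o(t^2)$ of volume, the ball $B(0,\sqrt{2\beta}\,t)$; this is exactly the $k=0$ case of Theorem~B, reflecting that in $d=2$ the range is dense enough to fill this ball no matter how fast $r_t$ decays.

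For the upper bound it suffices to show that, for every $\varepsilon>0$, almost surely $R(t)\subseteq B(0,(\sqrt{2\beta}+\varepsilon)t)$ for all large $t$, since then $\widehat{Z}^{r_t}_t\subseteq B(0,(\sqrt{2\beta}+\varepsilon)t+r_0)$. This is the classical maximal-displacement bound for BBM: by the many-to-one lemma, $E[\#\{u\in Z(t):\sup_{s\le t}|X_u(s)|\ge a\}]=e^{\beta t}\,\mathbf{P}_0(\sup_{s\le t}|X(s)|\ge a)$, and comparing $\sup_{s\le t}|X(s)|$ with $|X(t)|$ at the cost of an arbitrarily small multiplicative loss in the exponent gives $\mathbf{P}_0(\sup_{s\le t}|X(s)|\ge a)\le C\,e^{-(1-\eta)a^2/(2t)}$; with $a=(\sqrt{2\beta}+\varepsilon)t$ and $\eta$ small this expectation decays like $e^{-ct}$, so Markov's inequality, Borel--Cantelli along integers and monotonicity of the range complete this direction. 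Letting $\varepsilon\downarrow0$ yields $\limsup\le 2\pi\beta$.

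For the lower bound, fix $\varepsilon>0$ and a small $\delta=\delta(\varepsilon)>0$, and put $t_1=(1-\delta)t$. I would apply Theorem~A with the parameter $k$ replaced by $0$, with radius $1$ in place of $r_0$, and with $\theta$ close to $1$ (legitimate for every $\theta<1$): almost surely, for all large $t$, every point of $B(0,(\sqrt{2\beta}-\varepsilon)t)$ lies within distance $1$ of some particle of $Z(t_1)$ --- call this event $G_t$. On $G_t$, for each such $x$ choose a particle $u^\ast=u^\ast(x)\in Z(t_1)$ with $|X_{u^\ast}(t_1)-x|\le1$; by the branching property the subtree rooted at $u^\ast$ is, given $\mathcal{F}_{t_1}$, an independent BBM of duration $\delta t$ started from $X_{u^\ast}(t_1)$, and its range is contained in $R(t)$. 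Since $\{x\notin\widehat{Z}^{r_t}_t\}=\{R(t)\cap B(x,r_t)=\emptyset\}$, the point $x$ can fail to be covered only if this subtree's range misses $B(x,r_t)$. Writing
\[
h_t:=\sup_{|z|\le1}P\big(\text{the range of a duration-}\delta t\text{ BBM started at }z\text{ misses }B(0,r_t)\big),
\]
one gets $E\big[\textsf{vol}\big(B(0,(\sqrt{2\beta}-\varepsilon)t)\setminus\widehat{Z}^{r_t}_t\big)\,\mathbbm{1}_{G_t}\big]\le\pi(\sqrt{2\beta}-\varepsilon)^2 t^2\,h_t$. Hence, if $\sum_n h_n<\infty$, Markov's inequality and Borel--Cantelli give $\textsf{vol}(B(0,(\sqrt{2\beta}-\varepsilon)n)\setminus\widehat{Z}^{r_n}_n)\,\mathbbm{1}_{G_n}=o(n^2)$ a.s., which together with $G_n$ holding eventually a.s.\ and the interpolation to general $t$ yields $\liminf_{t\to\infty}\textsf{vol}(\widehat{Z}^{r_t}_t)/t^2\ge\pi(\sqrt{2\beta}-\varepsilon)^2$; letting $\varepsilon\downarrow0$ closes the gap.

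Everything thus reduces to the key estimate that $h_t\to0$ (indeed summably), i.e.\ that in $d=2$ a BBM of duration $\tau=\delta t$ hits a disk of radius $r_t=r_0e^{-\beta k t}$ lying within unit distance of its starting point with probability tending to $1$; this is where the well-known two-dimensional Brownian hitting asymptotics enter. For $k=0$ the disk has the fixed radius $r_0$ and Theorem~A (with $k=0$) already forces it to be hit, so $h_t=0$ eventually and there is nothing to prove; the substance is $k>0$. There I would argue that, with probability $1-o(1)$ at a rate summable in $t$, the duration-$\tau$ BBM has at least $e^{\beta\tau/4}$ particles within distance $1$ of its start at time $\tau/2$ --- a lower-tail estimate on the BBM population in a bounded region near the origin --- and that each of these spawns an independent Brownian motion which, by the classical formula for a planar Brownian motion hitting a small disk before leaving a disk of radius $\asymp\sqrt{\tau}$, reaches $B(0,r_t)$ during the remaining time $\tau/2$ with probability $\gtrsim 1/\log(1/r_t)\asymp(\log t)/t$; since this per-trial probability is only polynomially small while there are $e^{\beta\tau/4}$ independent trials, the chance that all of them miss is at most $\exp\!\big(-c\,e^{\beta\delta t/4}(\log t)/t\big)$, which is doubly exponentially small. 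The main obstacle is precisely this step: one must produce the lower-tail bound on the near-origin population with a failure probability summable uniformly in $t$, and extract genuinely independent hitting trials, all while tracking the competition between the exponential growth of the population and the merely logarithmic two-dimensional cost of hitting an exponentially small disk. It is exactly the logarithmic (rather than polynomial) nature of this cost that makes the argument go through for every $k\ge0$ when $d=2$, and its breakdown in higher dimensions is what forces the separate treatment of $d\ge3$.
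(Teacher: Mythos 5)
Your proposal is correct in substance and rests on the same three pillars as the paper's proof: the maximal-displacement/many-to-one bound for the upper bound, a density statement for the support at an earlier time, and the fact that in $d=2$ an exponentially small disk is hit by a single Brownian particle with only \emph{polynomially} small probability, so that exponentially many independent nearby particles hit it with super-exponentially high probability. The organization differs in two minor but genuine ways. First, for the covering step the paper does not integrate the per-point failure probability over $x$ (your Fubini/first-moment bound on the uncovered volume) but instead packs the subcritical ball with $n_t=\lceil c\,t/r_t\rceil^d$ balls of radius $r_t/(2\sqrt d)$ and takes a union bound; the two devices are interchangeable here since the per-point (or per-small-ball) failure probability is doubly exponentially small. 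Second, you invoke Theorem~A to place \emph{one} particle within distance $1$ of each target point at time $(1-\delta)t$ and then must regrow an exponential population inside the subtree, whereas the paper applies the local-mass lower-deviation result \cite[Thm.~1]{M2018} directly to a polynomial net of unit balls at time $t-1$, obtaining $e^{\beta\varepsilon t/2}$ particles per unit ball in one stroke and leaving only a unit time window for the hitting phase (via its Lemma~2, giving hitting probability $\asymp c_R/(\beta k t)$). The ``main obstacle'' you flag --- a lower-tail bound on the near-origin population with summable failure probability, plus independence of the hitting trials --- is exactly this cited result combined with the Markov/branching property at the intermediate time while discarding the branching in the final window, so it is not a gap but the ingredient the paper itself imports; your two-stage version only needs the $\theta=0$ case of that theorem. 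One small caution: your blanket claim that moving $t$ within a unit interval perturbs the volume by $o(t^2)$ is not literally true for a general random set (shrinking the radius by the fixed factor $e^{-\beta k}$ could a priori lose a constant fraction of volume); the clean fix is to prove the integer-time covering statement with the smaller radius $r_{n+1}$ and the larger ball $B(0,\rho_{n+1})$, which your hitting estimate delivers at no extra cost, and then use $R(t)\supseteq R(n)$ and $r_t\ge r_{n+1}$ for $t\in[n,n+1]$.
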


\begin{theorem}\label{theorem2} 
In $d\geq 3$, with probability one,
\begin{equation} 
\underset{t\rightarrow\infty}{\lim}\frac{\textsf{vol}\left(\widehat{Z}^{r_t}_t\right)}{t^d}
= \begin{cases}   
[2\beta(1-k(d-2))]^{d/2}\omega_d & \text{if} \:\: k<1/(d-2), \\
0 & \text{if} \:\: k\geq 1/(d-2).
\end{cases} \label{thm2}
\end{equation}
\end{theorem}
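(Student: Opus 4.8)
\medskip

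\noindent\emph{Strategy.} The plan is to prove the unified statement that, almost surely, $\lim_{t\to\infty}\textsf{vol}(\widehat Z_t^{r_t})/t^d=\omega_d(\rho^*)^d$, where $\rho^*:=\sqrt{2\beta}$ when $d=2$ and $\rho^*:=\sqrt{(2\beta(1-k(d-2)))_+}$ when $d\ge3$; since $\omega_2(\rho^*)^2=2\pi\beta$, and since for $d\ge3$ one has $\omega_d(\rho^*)^d=[2\beta(1-k(d-2))]^{d/2}\omega_d$ if $k<1/(d-2)$ and $\omega_d(\rho^*)^d=0$ if $k\ge1/(d-2)$, this yields Theorem~\ref{theorem1} and both cases of Theorem~\ref{theorem2}. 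The value $\omega_d(\rho^*)^d$ is the volume of $B(0,\rho^*t)$, and the heuristic is that $\widehat Z_t^{r_t}$ fills this ball up to an $o(t^d)$ error: near a point $x$ with $|x|\approx\rho t$ the cloud $Z(t)$ carries, per unit of space and up to subexponential factors, $\approx e^{(\beta-\rho^2/2)t}$ particles, and over a time-window of bounded length ending at $t$ the $r_t$-sausage of each such particle's trajectory has expected volume of order $r_t^{d-2}$ in $d\ge3$ (of order $1/\log(1/r_t)$ in $d=2$, by the classical Wiener-sausage asymptotics); so the expected number of trajectories whose $r_t$-sausage covers $x$ is, up to subexponential factors, $e^{(\beta-\rho^2/2)t}r_t^{d-2}$, which blows up precisely when $\beta-\rho^2/2-\beta k(d-2)>0$, i.e.\ $\rho<\rho^*$ (in $d=2$ the logarithmic factor is subexponential, so the condition is just $\rho<\sqrt{2\beta}$, irrespective of $k$), whereas beyond radius $\rho^*t$ essentially nothing is reached.

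\medskip

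\noindent\emph{Upper bound.} By Fubini, $E[\textsf{vol}(\widehat Z_t^{r})]=\int_{\mathbb R^d}P(R(t)\cap B(x,r)\neq\emptyset)\,dx$. Since the BBM is strictly dyadic, any particle that ever enters $B(x,r)$ has a descendant alive at time $t$ whose ancestral path met $B(x,r)$, so the many-to-one lemma yields $P(R(t)\cap B(x,r)\neq\emptyset)\le e^{\beta t}\,\mathbf P_0(\tau_{B(x,r)}\le t)$, with $\tau$ the first hitting time. Decomposing at the first visit of the unit sphere around $x$ and using standard Brownian estimates, $\mathbf P_0(\tau_{B(x,r)}\le t)\le C\,e^{-(|x|-1)^2/(2t)}\,(r^{d-2}\wedge1)$ for $d\ge3$, and $\mathbf P_0(\tau_{B(x,r)}\le t)\le C\,e^{-(|x|-1)^2/(2t)}$ for $d=2$. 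With $r=r_t=r_0e^{-\beta kt}$ the integrand $\min\{1,e^{\beta t}\mathbf P_0(\tau_{B(x,r_t)}\le t)\}$ has exponential rate $\beta-|x|^2/(2t^2)-\beta k(d-2)$ (rate $\beta-|x|^2/(2t^2)$ if $d=2$); hence it equals $1$ for $|x|\le(\rho^*-\delta)t$ and is $\le e^{-c(\delta)t}$ for $|x|\ge(\rho^*+\delta)t$ once $t$ is large, so $E[\textsf{vol}(\widehat Z_t^{r_t}\setminus B(0,(\rho^*+\delta)t))]\le e^{-c'(\delta)t}$, while trivially $\textsf{vol}(\widehat Z_t^{r_t}\cap B(0,(\rho^*+\delta)t))\le\omega_d(\rho^*+\delta)^dt^d$. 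Markov's inequality plus Borel--Cantelli along $t_n=n$ then give $\textsf{vol}(\widehat Z_{t_n}^{r_{t_n}})\le(\omega_d(\rho^*+\delta)^d+\varepsilon)\,t_n^d$ eventually a.s.; when $k\ge1/(d-2)$ one has $\rho^*=0$ and the identical argument with the split at radius $\delta t$ gives the limit $0$.

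\medskip

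\noindent\emph{Lower bound; the density input.} Assume $\rho^*>0$ and fix $\delta>0$. Choose a window $[t-T,t]$ with $T$ a large absolute constant and condition on $\mathcal F_{t-T}$. For $u\in Z(t-T)$ let $S_u$ be the $r_t$-Wiener sausage over $[t-T,t]$ of one designated descendant lineage of $u$; the $S_u$ are conditionally independent given $\mathcal F_{t-T}$, and $\bigcup_uS_u\subseteq\widehat Z_t^{r_t}$. For $|x|<(\rho^*-\delta)t$ set $N_x:=\#\{u:x\in S_u\}$, a sum of conditionally independent indicators with $E[N_x\mid\mathcal F_{t-T}]=\sum_u\mathbf P_{X_u(t-T)}(\tau_{B(x,r_t)}\le T)\ge\tfrac12\,r_t^{d-2}\,N_x'$ in $d\ge3$ (resp.\ $\ge c\,N_x'/\log(1/r_t)$ in $d=2$), where $N_x'$ is the number of particles of $Z(t-T)$ within distance $1$ of $x$; here I use the standard bound $\mathbf P_y(\tau_{B(x,r)}\le T)\ge\tfrac12\,r^{d-2}$ (resp.\ $\ge c/\log(1/r)$), valid for $|y-x|\le1$, $r$ small and $T$ a large enough constant. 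Now I invoke the almost-sure lower bounds on the number of BBM particles near subcritical points from \cite{M2018,M2019}, in the spirit of Theorem~A: almost surely, eventually, and uniformly over $|x|<(\rho^*-\delta)t$ --- which follows after covering $B(0,(\rho^*-\delta)t)$ by $e^{o(t)}$ unit balls and a union bound, using $\rho^*\le\sqrt{2\beta}$ to keep those $x$ strictly inside the support --- one has $N_x'\ge e^{(\beta-|x|^2/(2t^2)-o(1))t}$. On this $\mathcal F_{t-T}$-measurable event, $E[N_x\mid\mathcal F_{t-T}]\ge e^{\eta(\delta)t}$, so $P(x\notin\widehat Z_t^{r_t}\mid\mathcal F_{t-T})\le P(N_x=0\mid\mathcal F_{t-T})\le e^{-e^{\eta t}}$; integrating over $x$ and taking expectations gives $E[\textsf{vol}(B(0,(\rho^*-\delta)t)\setminus\widehat Z_t^{r_t})]\le e^{-c''(\delta)t}$ for large $t$, so Markov plus Borel--Cantelli along $t_n=n$ yield $\textsf{vol}(\widehat Z_{t_n}^{r_{t_n}})\ge(\omega_d(\rho^*-\delta)^d-\varepsilon)\,t_n^d$ eventually a.s.

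\medskip

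\noindent\emph{From the subsequence to all $t$, and the main obstacle.} Since $t\mapsto R(t)$ is increasing and $t\mapsto r_t$ decreasing, $\widehat Z_{t_n}^{r_{t_{n+1}}}\subseteq\widehat Z_t^{r_t}\subseteq\widehat Z_{t_{n+1}}^{r_{t_n}}$ for $t\in[t_n,t_{n+1}]$; as $r_{t_{n\pm1}}/r_{t_n}=e^{\mp\beta k}$ is a fixed constant and only the exponential rate of the radius enters the two estimates above (a constant prefactor does not move $\rho^*$), the two extreme sets obey the same bounds as $\widehat Z_{t_n}^{r_{t_n}}$ up to the factor $t_{n+1}^d/t_n^d\to1$, so letting $n\to\infty$ and then $\delta,\varepsilon\downarrow0$ yields the unified statement for all $t$, hence Theorems \ref{theorem1} and \ref{theorem2}. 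The bulk of this is routine; the genuinely delicate point is the density input in the lower bound: securing an almost-sure lower bound on the counts $N_x'$ that is simultaneously uniform over the entire subcritical ball and decays reliably enough for Borel--Cantelli --- i.e.\ pinning down exactly which estimate of \cite{M2018,M2019} to quote and carrying out the covering argument that upgrades pointwise large-deviation control to this uniform control. By contrast the Brownian estimates are needed only in crude form (only exponential rates survive at scale $t^d$), and the lack of monotonicity of $t\mapsto\widehat Z_t^{r_t}$ is disposed of by the sandwich just described.
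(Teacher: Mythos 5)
Your proposal is correct and follows essentially the same route as the paper: the lower bound rests on the large-deviation control of the local mass in unit balls throughout the subcritical zone (the density input from \cite{M2018}, made uniform by a polynomial covering and union bound) combined with the hitting probability of a radius-$r_t$ ball over a bounded time window (the paper's Lemma~\ref{lemma2}), while the upper bound is the same first-moment computation, since your Fubini integral $\int e^{\beta t}\,\mathbf{P}_0(\tau_{B(x,r_t)}\le t)\,dx$ restricted to $|x|\ge \rho t$ is just the paper's bound $E[N_t^\theta]\cdot\mathbf{E}_0[\textsf{vol}(X_t^{r_t})]$ with the spatial cutoff inserted on the other side of the identity. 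The only differences are organizational — you bound the expected uncovered volume where the paper covers the subcritical ball by an exponentially fine net and beats the union bound with super-exponentially small per-ball failure probabilities, and you are more explicit than the paper about interpolating from integer times to all $t$.
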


\begin{remark}
Theorem~\ref{theorem1} says that in $d=2$, the large-time behavior of the volume of $\widehat{Z}^{r_t}_t$ is as different as it can be from that of $Z^{r_t}_t$, which is given by Theorem B as: with probability one,  
\begin{align}
\underset{t\rightarrow\infty}{\lim}\frac{\textsf{vol}\left(Z^{r_t}_t\right)}{t^2}&= \begin{cases} 2\pi\beta(1-2k) & \text{if} \:\: k<1/2, \\
0 & \text{if} \:\: k\geq 1/2.
\end{cases} \nonumber
\end{align}
This can be explained as follows. In $d=2$, the motion component of BBM plays a dominating role in the large-time behavior of the shrinking BBM sausage due to the almost sure neighborhood recurrence of Brownian motion. Note that the result does not depend on $k$. For large $t$, a BBM-sausage with any exponentially shrinking radius (independent of how large the exponential rate of decay is for the radius) covers the entire \emph{subcritical zone}, that is, for any $0<\varepsilon<1$, the sausage $\widehat{Z}^{r_t}_t$ eventually covers $B(0,\sqrt{2\beta}(1-\varepsilon)t)$ almost surely.

In $d\geq 3$, from Theorem B, we have  
\begin{equation}\underset{t\rightarrow\infty}{\lim}\frac{\textsf{vol}\left(Z^{r_t}_t\right)}{t^d}= \begin{cases}   
[2\beta(1-kd)]^{d/2}\omega_d & \text{if} \:\: k\leq 1/d, \\
0 & \text{if} \:\: k>1/d.
\end{cases} \nonumber
\end{equation}
Hence, Theorem~\ref{theorem2} says that in $d\geq 3$ for large $t$, provided that the decay of the sausage radius is slow enough, the accumulated support of BBM over $[0,t)$ has a non-trivial contribution to the volume of the $r_t$-shrinking sausage over $[0,t]$ although the contribution is not significant enough to cover the entire subcritical zone; whereas, if the decay of $r_t$ is sharper than a certain threshold (i.e., if $k>1/(d-2)$), the accumulated support over $[0,t)$ and the support at time $t$ both have negligible contribution on the scale of $t^d$. 
\end{remark}

\begin{remark}
In $d=1$, it is clear that with probability one,
\begin{equation} \underset{t\rightarrow\infty}{\lim}\frac{\textsf{vol}\left(\widehat{Z}^{r_t}_t\right)}{t}=\underset{t\rightarrow\infty}{\lim}\frac{\left[2\,r_t+\underset{0\leq s\leq t}{\sup}R(t)-\underset{0\leq s\leq t}{\inf}R(t)\right]}{t}=2\sqrt{2\beta}, \nonumber
\end{equation}
which follows from the well-known result of Bramson \cite{B1978} that the speed of strictly dyadic BBM converges to $\sqrt{2\beta}$ as $t\rightarrow\infty$ with probability one. On the other hand, Theorem B says that with probability one,  
\begin{equation}
\underset{t\rightarrow\infty}{\lim}\frac{\textsf{vol}\left(Z^{r_t}_t\right)}{t}=2\sqrt{2\beta(1-k)}. \nonumber
\end{equation}
Therefore, the large-time behavior of the volume of $\widehat{Z}^{r_t}_t$ is as different as it can be from that of $Z^{r_t}_t$.
\end{remark}

\section{Preparations}

\subsection{Preliminary results}

In this section, we develop preparatory results for the proofs of Theorem~\ref{theorem1} and Theorem~\ref{theorem2}. The first result is about the large-time asymptotic probability of atypically large Brownian displacements. For a proof, see for example \cite[Lemma 5]{OCE2017}. As before, let $X=(X(t))_{t\geq 0}$ be a generic standard Brownian motion in $d$-dimensions, and $\mathbf{P}_x$ the law of $X$ started at position $x\in\mathbb{R}^d$, with corresponding expectation $\mathbf{E}_x$. 

\begin{propa}[Linear Brownian displacements]\label{proposition3}
For $\gamma>0$,
\begin{equation}  \mathbf{P}_0\left(\underset{0\leq s\leq t}{\sup}|X(s)|>\gamma t\right)=\exp[-\gamma^2t/2+o(t)].  \nonumber 
\end{equation}
\end{propa}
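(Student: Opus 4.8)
The plan is to establish matching upper and lower bounds, each of the form $\exp[-\gamma^2 t/2 + o(t)]$. The lower bound is immediate: discarding the supremum and keeping only the terminal time,
$$\mathbf{P}_0\Big(\sup_{0\leq s\leq t}|X(s)|>\gamma t\Big)\geq \mathbf{P}_0\big(|X(t)|>\gamma t\big)\geq \mathbf{P}_0\big(X_1(t)>\gamma t\big),$$
where $X_1$ denotes the first coordinate of $X$, a standard one-dimensional Brownian motion. By the classical Gaussian tail estimate, $\mathbf{P}_0(X_1(t)>\gamma t)=\mathbf{P}(\mathcal{N}(0,1)>\gamma\sqrt{t})=\exp[-\gamma^2 t/2+o(t)]$, the polynomial prefactor $1/(\gamma\sqrt{2\pi t})$ being absorbed into the $o(t)$. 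This gives the claimed lower bound in any dimension.

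For the upper bound, the only real point is to get the sharp rate $\gamma^2/2$ without losing a dimensional factor. I would reduce the radial supremum to one-dimensional suprema by way of a finite net on the sphere. Fix $\varepsilon\in(0,1)$ and pick unit vectors $u_1,\dots,u_N\in S^{d-1}$, with $N=N(\varepsilon,d)$ a constant, so that every point of $S^{d-1}$ lies within Euclidean distance $\varepsilon$ of some $u_j$. If $|X(s)|>\gamma t$ for some $s\leq t$, then, taking $u_j$ with $|X(s)/|X(s)|-u_j|\leq\varepsilon$, we get $\langle u_j,X(s)\rangle\geq (1-\varepsilon)|X(s)|>(1-\varepsilon)\gamma t$, hence $\sup_{0\leq s\leq t}\langle u_j,X(s)\rangle>(1-\varepsilon)\gamma t$ for that index. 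A union bound then yields
$$\mathbf{P}_0\Big(\sup_{0\leq s\leq t}|X(s)|>\gamma t\Big)\leq\sum_{j=1}^{N}\mathbf{P}_0\Big(\sup_{0\leq s\leq t}\langle u_j,X(s)\rangle>(1-\varepsilon)\gamma t\Big).$$
Each summand is the tail of the running maximum of the one-dimensional Brownian motion $\langle u_j,X(\cdot)\rangle$, which I would control by the exponential martingale $\exp(\lambda\langle u_j,X(s)\rangle-\lambda^2 s/2)$ together with Doob's maximal inequality: $\mathbf{P}_0(\sup_{s\leq t}\langle u_j,X(s)\rangle>a)\leq e^{-\lambda a+\lambda^2 t/2}$, optimized at $\lambda=a/t$ to give $e^{-a^2/(2t)}$ (the reflection principle is an equivalent route). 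With $a=(1-\varepsilon)\gamma t$ the right-hand side above is at most $N e^{-(1-\varepsilon)^2\gamma^2 t/2}=\exp[-(1-\varepsilon)^2\gamma^2 t/2+o(t)]$; letting $\varepsilon\downarrow 0$ completes the upper bound and hence the proof.

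The main obstacle is exactly this sharp-constant issue in the upper bound: a naive coordinatewise union bound (using $|X(s)|>\gamma t\Rightarrow\max_i|X_i(s)|>\gamma t/\sqrt{d}$) costs a factor $d$ in the exponent, so one must pass through the spherical net (or an equivalent device) to recover the exact rate. Everything else — the Gaussian tail asymptotics and the martingale/maximal-inequality estimate — is standard; alternatively, the statement may simply be quoted from \cite[Lemma 5]{OCE2017}.
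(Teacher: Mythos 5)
Your proposal is correct. Note, however, that the paper does not actually prove Proposition A: it simply quotes the statement from \cite[Lemma 5]{OCE2017}, so there is no internal argument to compare against. What you supply is a clean, self-contained derivation: the lower bound by restricting to the terminal time and a single coordinate (the Gaussian tail prefactor being $\exp(o(t))$), and the upper bound by projecting onto a finite $\varepsilon$-net of directions on $S^{d-1}$, controlling each one-dimensional running maximum by the exponential martingale (or reflection principle), and letting $\varepsilon\downarrow 0$ after taking $\limsup_t \frac{1}{t}\log(\cdot)$. The details check out: for a unit vector $v$ with $|v-u_j|\leq\varepsilon$ one has $\langle u_j,v\rangle=1-|v-u_j|^2/2\geq 1-\varepsilon$, so the projection step is sound, and the cardinality $N(\varepsilon,d)$ of the net is independent of $t$, so the union bound only contributes to the $o(t)$. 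You also correctly identify the one genuine pitfall, namely that a coordinatewise bound would degrade the rate to $\gamma^2/(2d)$; the spherical net is exactly the right device to avoid it. In short, your argument is a legitimate stand-alone proof of a result the paper treats as a black box.
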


\smallskip

The behavior as $t\rightarrow\infty$ of the expected volume of the Wiener sausage is well-known (\cite{S1964}, \cite{L1988}): 
\begin{equation} \mathbf{E}_0[\textsf{vol}(X^r_t)]
= \begin{cases}
\sqrt{\frac{8t}{\pi}}(1+o(1)), & d=1, \\
\frac{2\pi t}{\log t}(1+o(1)), & d=2, \\
\kappa_r t(1+o(1)), & d\geq 3,
\end{cases}  \label{eq201}
\end{equation}
where $r>0$ is constant, and $\kappa_r=r^{d-2}2\pi^{d/2}/\Gamma(d/2-1)$ is the Newtonian capacity of $B(0,r)$ with $\Gamma$ denoting the gamma function. The expected volume of the shrinking Wiener sausage can be obtained from \eqref{eq201} and the scaling invariance of Brownian motion.

\begin{lemma}[Expected volume of exponentially shrinking Wiener sausage] \label{lemma1}
Let $k>0$, $r_0>0$, and $r:\mathbb{R}_+\rightarrow\mathbb{R}_+$ be defined by $r(t)=r_0 e^{-\beta kt}$. Then,
\begin{equation} \mathbf{E}_0[\emph{\textsf{vol}}(X^{r_t}_t)]
= \begin{cases}
\sqrt{\frac{8t}{\pi}}(1+o(1)), & d=1, \\
\frac{\pi}{\beta k}(1+o(1)), & d=2, \\
\kappa_{r_0} t e^{-(d-2)\beta kt}(1+o(1)), & d\geq 3.
\end{cases}  \nonumber
\end{equation}
\end{lemma}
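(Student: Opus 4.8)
The plan is to reduce everything to the known asymptotics \eqref{eq201} for the constant-radius Wiener sausage together with Brownian scaling. Recall that for any fixed $t>0$ and $\lambda>0$, the process $s\mapsto \lambda^{-1}X(\lambda^2 s)$ is again a standard $d$-dimensional Brownian motion started at the origin, and under this scaling the sausage transforms as $X^{r}(t)$ having the same law as $\lambda\cdot \widetilde X^{r/\lambda}(t/\lambda^2)$ for an independent copy $\widetilde X$; hence $\mathbf{E}_0[\textsf{vol}(X^{r}_t)] = \lambda^d\,\mathbf{E}_0[\textsf{vol}(\widetilde X^{\,r/\lambda}_{t/\lambda^2})]$. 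The idea is to choose $\lambda = \lambda(t)$ so that the rescaled radius $r_t/\lambda$ becomes a \emph{constant} (in $d\ge 3$, where \eqref{eq201} has a genuine $r$-dependence through $\kappa_r$), or simply to exploit that in $d=1,2$ the leading asymptotics in \eqref{eq201} do not involve $r$ at all.

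First, for $d\ge 3$: take $\lambda(t)=e^{-\beta k t}$, so that $r_t/\lambda(t)=r_0$ is constant and the rescaled time is $t/\lambda^2 = t\,e^{2\beta k t}\to\infty$. Applying \eqref{eq201} in the regime $d\ge 3$ with radius $r_0$ at time $t e^{2\beta k t}$ gives $\mathbf{E}_0[\textsf{vol}(X^{r_0}_{t e^{2\beta k t}})]=\kappa_{r_0}\, t e^{2\beta k t}(1+o(1))$, and multiplying by $\lambda^d = e^{-d\beta k t}$ yields $\kappa_{r_0}\, t\, e^{-(d-2)\beta k t}(1+o(1))$, which is exactly the claimed expression. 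One only has to check that the $o(1)$ error from \eqref{eq201}, which holds as the (rescaled) time tends to infinity, is still $o(1)$ after composition with $t\mapsto t e^{2\beta k t}$ — this is immediate since that map is increasing to infinity.

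Next, for $d=1$: Brownian scaling is not even needed — $X^{r_t}_t$ contains $X^{0}_t$ (the range of the motion) and is contained in an $r_t$-neighborhood of it, so $\textsf{vol}(X^{0}_t)\le \textsf{vol}(X^{r_t}_t)\le \textsf{vol}(X^{0}_t)+2r_t$, and since $r_t\to 0$ while $\mathbf{E}_0[\textsf{vol}(X^{0}_t)]=\mathbf{E}_0[\sup_{s\le t}X(s)-\inf_{s\le t}X(s)]=\sqrt{8t/\pi}(1+o(1))$ by the reflection principle (this is the $d=1$ line of \eqref{eq201}), the claim follows. For $d=2$: with $\lambda(t)=e^{-\beta k t}$ again, $r_t/\lambda(t)=r_0$ and the rescaled time $t e^{2\beta k t}\to\infty$, so the $d=2$ line of \eqref{eq201} gives $\mathbf{E}_0[\textsf{vol}(X^{r_0}_{t e^{2\beta k t}})] = \dfrac{2\pi\, t e^{2\beta k t}}{\log(t e^{2\beta k t})}(1+o(1))$; since $\log(t e^{2\beta k t}) = 2\beta k t + \log t = 2\beta k t(1+o(1))$, this is $\dfrac{\pi}{\beta k}\, e^{2\beta k t}(1+o(1))$, and multiplying by $\lambda^2 = e^{-2\beta k t}$ produces $\dfrac{\pi}{\beta k}(1+o(1))$, as claimed. (Here one sees why $k>0$ is required: for $k=0$ the denominator is $\log t$, not $2\beta k t$.)

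The only genuine point requiring care — and the step I expect to be the mildest obstacle — is legitimizing the scaling identity $\mathbf{E}_0[\textsf{vol}(X^{r}_t)]=\lambda^d\,\mathbf{E}_0[\textsf{vol}(X^{\,r/\lambda}_{t/\lambda^2})]$ at the level of volumes: this follows because the map $x\mapsto \lambda x$ sends $\bigcup_{s\le t}B(X(s),r/\lambda)$ onto $\bigcup_{s\le t}B(\lambda X(s), r)$, the scaled process $s\mapsto \lambda X(s/\lambda^2)$ is a Brownian motion, a reparametrization of $s$ does not change the union, and $\textsf{vol}(\lambda A)=\lambda^d\textsf{vol}(A)$ for Borel $A$; taking expectations (everything is nonnegative, so Tonelli applies) gives the identity. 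Once this is in hand, the three cases above are just substitution, and plugging into \eqref{eq201} finishes the proof.
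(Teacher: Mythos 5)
Your proof is correct and follows essentially the same route as the paper: rescale by $\lambda=e^{-\beta kt}$ to convert the shrinking sausage into a constant-radius sausage run up to time $te^{2\beta kt}$, then plug into \eqref{eq201}; your computation $\log(te^{2\beta kt})=2\beta kt(1+o(1))$ for $d=2$ is exactly the step the paper relies on. The only cosmetic difference is that you handle $d=1$ by sandwiching the sausage between the range and the range plus $2r_t$ rather than by scaling, which is equally valid.
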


\begin{proof} Write
\begin{equation} \bigcup_{0\leq s\leq t}B(X(s),r_0 e^{-\beta kt})=e^{-\beta kt}\bigcup_{0\leq s\leq t}B(X(s)e^{\beta kt},r_0). \label{eq203}
\end{equation}
By scaling invariance, we have
\begin{equation} \mathbf{E}_0\left[\textsf{vol}\left(\bigcup_{0\leq s\leq t}B(X(s)e^{\beta kt},r_0)\right)\right]=
\mathbf{E}_0\left[\textsf{vol}\left(\bigcup_{0\leq s\leq te^{2\beta kt}}B(X(s),r_0)\right)\right]. \label{eq204}
\end{equation}
Then, it follows from \eqref{eq203} and \eqref{eq204} that
\begin{equation} \mathbf{E}_0\left[\textsf{vol}\left(\bigcup_{0\leq s\leq t}B(X(s),r_0 e^{-\beta kt})\right)\right]=e^{-\beta (kd)t} \mathbf{E}_0\left[\textsf{vol}\left(\bigcup_{0\leq s\leq te^{2\beta kt}}B(X(s),r_0)\right)\right]. \label{eq205}
\end{equation}
Since $te^{2\beta kt}\rightarrow \infty$ as $t\rightarrow\infty$, it follows from \eqref{eq201} that
\begin{equation} \mathbf{E}_0\left[\textsf{vol}\left(\bigcup_{0\leq s\leq te^{2\beta kt}}B(X(s),r_0)\right)\right]
= \begin{cases}
\sqrt{\frac{8t\,e^{2\beta kt}}{\pi}}(1+o(1)), & d=1, \\
\frac{2\pi t\,e^{2\beta kt}}{\log(t\,e^{2\beta kt})}(1+o(1)), & d=2, \\
\kappa_{r_0} t e^{2\beta kt}(1+o(1)), & d\geq 3.
\end{cases}  \label{eq206}
\end{equation}
Use \eqref{eq205} and \eqref{eq206} to complete the proof.
\end{proof}

\begin{lemma}[Hitting probability of exponentially shrinking ball from outside] \label{lemma2}
Let $k>0$, $r_0>0$, and $r:\mathbb{R}_+\rightarrow\mathbb{R}_+$ be defined by $r(t)=r_0 e^{-\beta kt}$. Fix $\rho\in\mathbb{R}^d$ such that $|\rho|=:R>r_0$. Then,
\begin{equation} \mathbf{P}_{\rho}\left(\underset{0\leq s\leq 1}{\min}|X(s)|<r_t\right)=\frac{c_{R}}{\beta kt}(1+o(1))\quad \text{in}\:\:d=2, \label{hitting2}
\end{equation} 
where $c_R=\int_{R^2/2}^\infty \frac{e^{-x}}{2x}\text{d}x$, and
\begin{equation} \mathbf{P}_{\rho}\left(\underset{0\leq s\leq 1}{\min}|X(s)|<r_t\right)=ce^{-\beta k(d-2)t}(1+o(1))\quad \text{in}\:\:d\geq 3, \label{hitting3}
\end{equation}
where $c=c(d,r_0,R)>0$.
\end{lemma}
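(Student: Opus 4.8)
The plan is to rewrite the probability as $\mathbf{P}_\rho(H_{r_t}\le 1)$, where $H_r:=\inf\{s\ge0:|X(s)|\le r\}$ is the first hitting time of $\bar B(0,r)$, and to exploit that $r_t=r_0e^{-\beta kt}\downarrow0$ as $t\to\infty$ (here $k>0$ is essential); the cases $d\ge3$ and $d=2$ are handled by different devices, since for $d\ge3$ the total hitting probability has an exact form. For $d\ge3$ I would decompose $\mathbf{P}_\rho(H_{r_t}\le1)=\mathbf{P}_\rho(H_{r_t}<\infty)-\mathbf{P}_\rho(1<H_{r_t}<\infty)$. The first term equals $(r_t/R)^{d-2}$ by the classical fact that $x\mapsto|x|^{2-d}$ is harmonic on $\mathbb{R}^d\setminus\{0\}$ (optional stopping of $|X(\cdot\wedge\sigma)|^{2-d}$ for $\sigma$ the exit time of an annulus $\{r_t<|x|<L\}$, then $L\to\infty$). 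For the second term, on $\{H_{r_t}>1\}$ one has $|X(1)|>r_t$, so the Markov property at time $1$ and the same formula give $\mathbf{P}_\rho(1<H_{r_t}<\infty)=r_t^{d-2}\,\mathbf{E}_\rho[\mathbf{1}_{\{H_{r_t}>1\}}|X(1)|^{2-d}]$; as $r_t\downarrow0$ the indicator increases to $1$ a.s.\ (Brownian motion avoids the origin over $[0,1]$ in $d\ge3$) and $\mathbf{1}_{\{H_{r_t}>1\}}|X(1)|^{2-d}\le|X(1)|^{2-d}\in L^1(\mathbf{P}_\rho)$ since $\rho\ne0$, so dominated convergence gives $\mathbf{E}_\rho[\mathbf{1}_{\{H_{r_t}>1\}}|X(1)|^{2-d}]\to\mathbf{E}_\rho[|X(1)|^{2-d}]$. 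Since $r_t^{d-2}=r_0^{d-2}e^{-\beta k(d-2)t}$, collecting terms yields \eqref{hitting3} with $c=r_0^{d-2}\bigl(R^{2-d}-\mathbf{E}_\rho[|X(1)|^{2-d}]\bigr)$; here $c>0$ because the positive local martingale $(|X(t)|^{2-d})_{t\ge0}$ is not a true martingale (equivalently, Brownian motion is transient in $d\ge3$), so $\mathbf{E}_\rho[|X(1)|^{2-d}]<|\rho|^{2-d}$.

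For $d=2$ I would compute, in two ways, the mean time spent in $B(0,r_t)$ up to a horizon $\tau>0$, $I_{r_t}(\tau):=\mathbf{E}_\rho\int_0^\tau\mathbf{1}_{\{X(s)\in B(0,r_t)\}}\,\mathrm{d}s$. Directly, $I_{r_t}(\tau)=\int_{B(0,r_t)}\bigl(\int_0^\tau p_s(\rho,y)\,\mathrm{d}s\bigr)\mathrm{d}y\sim\pi r_t^2\int_0^\tau p_s(\rho,0)\,\mathrm{d}s$ as $r_t\downarrow0$, where $p_s(x,y)=(2\pi s)^{-1}e^{-|x-y|^2/(2s)}$ and one uses that $y\mapsto\int_0^\tau p_s(\rho,y)\,\mathrm{d}s$ is bounded and continuous near $0$ (here $\rho\ne0$ is used); a change of variables gives $\pi\int_0^1 p_s(\rho,0)\,\mathrm{d}s=c_R$. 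On the other hand, the strong Markov property at $H_{r_t}$ gives $I_{r_t}(\tau)=\mathbf{E}_\rho[\mathbf{1}_{\{H_{r_t}\le\tau\}}\,\phi_{r_t}(\tau-H_{r_t})]$, where $\phi_{r_t}(u)$ is the mean occupation time of $B(0,r_t)$ over $[0,u]$ for Brownian motion started on $\partial B(0,r_t)$; Brownian scaling gives $\phi_{r_t}(u)=r_t^2\psi(u/r_t^2)$ with $\psi(T):=\int_0^T\mathbf{P}_z(|X(s)|\le1)\,\mathrm{d}s$, $|z|=1$, and since $\mathbf{P}_z(|X(s)|\le1)\sim(2s)^{-1}$ as $s\to\infty$ one gets $\psi(T)=\tfrac12\log T+O(1)$, hence $\phi_{r_t}(u)=r_t^2\log(1/r_t)\,(1+o(1))$ for each fixed $u>0$. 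Monotonicity of $\phi_{r_t}$ then squeezes: with $\tau=1$, $I_{r_t}(1)\le\phi_{r_t}(1)\,\mathbf{P}_\rho(H_{r_t}\le1)$ gives the lower bound $\mathbf{P}_\rho(H_{r_t}\le1)\ge \dfrac{c_R}{\log(1/r_t)}(1+o(1))$; with $\tau=1+\delta$ and $1+\delta-H_{r_t}\ge\delta$ on $\{H_{r_t}\le1\}$, $I_{r_t}(1+\delta)\ge\phi_{r_t}(\delta)\,\mathbf{P}_\rho(H_{r_t}\le1)$ gives $\mathbf{P}_\rho(H_{r_t}\le1)\le\dfrac{\pi\int_0^{1+\delta}p_s(\rho,0)\,\mathrm{d}s}{\log(1/r_t)}(1+o(1))$, and letting $\delta\downarrow0$ recovers the constant $c_R$. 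Since $\log(1/r_t)=\beta kt-\log r_0=\beta kt(1+o(1))$, this is \eqref{hitting2}.

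The genuinely non-mechanical steps are the following. In $d\ge3$, the positivity of $c$ must use that $|X(t)|^{2-d}$ is a strict local martingale, not just the supermartingale inequality. In $d=2$, the main difficulty is obtaining the upper bound with the \emph{sharp} constant $c_R$ and not merely the correct order $1/\log(1/r_t)$: with $\tau=1$, the occupation identity controls $\mathbf{P}_\rho(H_{r_t}\le1)$ only through $\phi_{r_t}(1-H_{r_t})$, which is much smaller than $\phi_{r_t}(1)$ precisely on the paths for which $H_{r_t}$ is close to $1$ (so that little residual time is left); enlarging the time window to $[0,1+\delta]$ and then sending $\delta\downarrow0$ is the device that renders those paths harmless. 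The supporting estimates $\psi(T)=\tfrac12\log T+O(1)$ and $I_{r_t}(\tau)\sim\pi r_t^2\int_0^\tau p_s(\rho,0)\,\mathrm{d}s$ are routine once $\rho\ne0$.
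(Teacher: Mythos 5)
Your proof is correct, but it takes a genuinely different route from the paper. For $d=2$ the paper simply cites Spitzer's 1958 result $\lim_{r\to0}\log(1/r)\,\mathbf{P}_\rho(\min_{0\le s\le t}|X(s)|<r)=\int_{R^2/(2t)}^\infty\frac{e^{-x}}{2x}\,\mathrm{d}x$ and substitutes $t=1$, $r=r_t$; your occupation-time identity with the $[0,1+\delta]$ enlargement essentially reproves that theorem from scratch, which is more work but self-contained. For $d\ge3$ the paper mimics Spitzer's Laplace-transform method: it writes $\widehat{G}(r,\lambda;R)=\lambda^{-1}(r/R)^v K_v(\sqrt{2\lambda}R)/K_v(\sqrt{2\lambda}r)$ with $v=(d-2)/2$, uses the small-argument asymptotics of the modified Bessel function $K_v$, and inverts the transform to get $\lim_{r\to0}r^{-2v}G(r,t;R)=\frac{1}{2^v\Gamma(v)}\int_0^t e^{-R^2/(2x)}x^{-1-v}\,\mathrm{d}x$; your decomposition $\mathbf{P}_\rho(H_{r}\le1)=(r/R)^{d-2}-r^{d-2}\,\mathbf{E}_\rho[\mathbf{1}_{\{H_{r}>1\}}|X(1)|^{2-d}]$ plus dominated convergence is considerably more elementary and avoids special functions entirely. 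The two constants agree: since $|y|^{2-d}$ is (a multiple of) the Green function, $\mathbf{E}_\rho[|X(1)|^{2-d}]=R^{2-d}-\mathrm{const}\cdot\int_0^1 p_s(\rho,0)\,\mathrm{d}s$, which both identifies your $c$ with the paper's integral expression and gives you strict positivity of $c$ directly, without having to invoke the strict-local-martingale property as a black box; I would recommend making that identity explicit, as it is the one place where your write-up leans on an unproved qualitative fact. What the paper's method buys is the full time-dependence of the limit (the asymptotic for every fixed horizon $t$ at once) and a two-line proof in $d=2$ given the reference; what yours buys is a uniform, reference-free probabilistic argument in both dimensions.
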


\begin{proof} 
For $d=2$, from \cite[Eq.\ (1.6)]{S1958} we have
\begin{equation}
\underset{r\rightarrow 0}{\lim}\log\left(\frac{1}{r}\right)\mathbf{P}_{\rho}\left(\underset{0\leq s\leq t}{\min}|X(s)|<r\right)=\int_{R^2/(2t)}^\infty \frac{e^{-x}}{2x}\,\text{d}x. \nonumber
\end{equation} 
Set $t=1$, and then replace $r$ by $r_t$ in the formula above. Then, \eqref{hitting2} follows since $r_t\rightarrow 0$ as $t\rightarrow\infty$. 

For $d\geq 3$, we follow an argument similar to the one in \cite{S1958}, which was developed for the case $d=2$. Recall that $R=|\rho|$ is fixed, and define $G$ and its Laplace transform $\widehat{G}$, respectively, as
$$ G(r,t;R)=\mathbf{P}_{\rho}\left(\underset{0\leq s\leq t}{\min}|X(s)|<r\right),\quad \widehat{G}(r,\lambda;R)=\int_0^\infty e^{-\lambda t}G(r,t;R)dt.$$
It is known that 
\begin{equation}
\widehat{G}(r,\lambda;R)=\frac{1}{\lambda}\left(\frac{r}{R}\right)^v\frac{K_v(\sqrt{2\lambda}R)}{K_v(\sqrt{2\lambda}r)}, \label{lemma2eq1}
\end{equation}
where $v=(d-2)/2$, and $K_v$ is the modified Bessel function of the second kind of order $v$, and that for $d\geq 3$ (i.e., $v>0$), 
\begin{equation}
K_v(z)=\frac{\Gamma(v)}{2}\left(\frac{2}{z}\right)^v (1+o(1)),\quad z\rightarrow 0. \label{lemma2eq2}
\end{equation} 
Then, it follows from \eqref{lemma2eq1} and \eqref{lemma2eq2} that
\begin{equation}
\underset{r\rightarrow 0}{\lim}\frac{\widehat{G}(r,\lambda;R)}{r^{2v}}=\frac{2^{1-v/2}}{\Gamma(v)\,R^v}\,\lambda^{v/2-1}K_v(\sqrt{2\lambda}R). \label{lemma2eq3}
\end{equation}  
Inverting the Laplace transforms in \eqref{lemma2eq3} yields
\begin{equation}
\underset{r\rightarrow 0}{\lim}\frac{G(r,t;R)}{r^{2v}}=\frac{1}{2^v\Gamma(v)}\int_0^t\frac{e^{-\frac{R^2}{2x}}}{x^{1+v}}\text{d}x. \label{lemma2eq4}
\end{equation}
Setting $t=1$, and then replacing $r$ by $r_t$ in \eqref{lemma2eq4} completes the proof of \eqref{hitting3} since $r_t\rightarrow 0$ as $t\rightarrow\infty$.
\end{proof}
 




\subsection{Heuristics}

It is clear that since the \emph{speed} of BBM is $\sqrt{2\beta}$, typically the volume of the BBM-sausage even with constant radius at time $t$ is not larger than $(\sqrt{2\beta})^d\,\omega_d$ on the scale of $t^d$. On the other hand, we know from Lemma~\ref{lemma1} that in $d=2$, the expected volume of the $r_t$-shrinking Wiener sausage is asymptotically constant. Therefore, since there are typically $e^{\beta t+o(t)}$ particles at time $t$, of which at least $e^{\varepsilon t}$ for some $\varepsilon>0$ can be treated as independent particles over the second half of the interval $[0,t]$, and since the volume of the subcritical zone grows only polynomially in $t$, provided that the particles of BBM spread out sufficiently homogeneously over the subcritical zone (see Theorem A), we expect the $r_t$-shrinking BBM-sausage to cover the entire subcritical zone in $d=2$.

The situation is different in $d\geq 3$. By Lemma~\ref{lemma1}, the expected volume of the $r_t$-shrinking Wiener sausage decays like $\exp[-\beta k(d-2)t+o(t)]$. On the other hand, the mass outside $B(0,\theta\sqrt{2\beta}t)$ typically grows like $\exp[\beta t(1-\theta^2)+o(t)]$. Therefore, even if we suppose, for an upper bound on the volume of the BBM-sausage, that all of the particles present outside $B(0,\theta\sqrt{2\beta}t)$ at time $t$ has followed independent Brownian paths over $[0,t]$, and that their respective $r_t$-shrinking Wiener sausages (going back to the initial ancestor) are all disjoint from one another, typically their union could only have a non-trivial volume (on the scale of $t^d$) outside $B(0,\theta\sqrt{2\beta}t)$ provided that $1-\theta^2\geq k(d-2)$, which is equivalent to $\theta\leq \sqrt{1-k(d-2)}$. This explains why the right-hand side of \eqref{thm2} is an upper bound. A heuristic argument as to why it is also a lower bound, can be given similar to the argument above for $d=2$.

\section{Proof of main results}

We first give elementary bounds on $\textsf{vol}(\widehat{Z}_t^{r_t})$ that are valid in any dimension. 

Since $Z_t^{r_t}\subseteq \widehat{Z}_t^{r_t}$ by definition, it is clear that 
$\textsf{vol}\left(Z_t^{r_t}\right)\leq \textsf{vol}(\widehat{Z}_t^{r_t}),$
and therefore, Theorem B implies that with probability one,
\begin{equation}
\underset{t\rightarrow\infty}{\liminf}\:\frac{\textsf{vol}(\widehat{Z}^{r_t}_t)}{t^d}\geq [2\beta(1-kd)]^{d/2}\omega_d. \label{eq51}
\end{equation}
In the rest of the manuscript, let $\mathcal{N}_t$ denote the set of particles of $Z$ that are alive at time $t$, and set $N_t=|\mathcal{N}_t|$. For $u\in\mathcal{N}_t$, let $(Y_u(s))_{0\leq s\leq t}$ denote the ancestral line up to $t$ of particle $u$. By the \emph{ancestral line up to $t$} of a particle present at time $t$, we mean the continuous trajectory traversed up to $t$ by the particle, concatenated with the trajectories of all its ancestors including the one traversed by the initial particle. Note that $(Y_u(s))_{0\leq s\leq t}$ is identically distributed as a Brownian trajectory $(X(s))_{0\leq s\leq t}$ for each $u\in\mathcal{N}_t$. For $t>0$, let $M_t:=\inf \{r\geq 0:R(t)\subseteq B(0,r)\}$. Then, using the union bound, for $\gamma>0$,
\begin{equation} P\left(M(t)>\gamma t\right)= P\left(\exists u\:\in \mathcal{N}_t:\sup_{0\le s\le t}|Y_u(s)|>\gamma t\right) \le E[N(t)]\:\mathbf{P}_0\left(\sup_{0\le s\le t}|X(s)|>\gamma t\right). \label{eq52}
\end{equation}
It is a standard result that $E[N(t)]=\exp(\beta t)$ (see for example \cite[Sect.\ 8.11]{KT1975}). Moreover, we know from Proposition A that $\mathbf{P}_0\left(\sup_{0\le s\le t}|X(s)|>\gamma t\right)=\exp[-\gamma^2 t/2+o(t)]$. Then, for fixed $\varepsilon>0$, defining the events  
$$A_k:=\left\{\textsf{vol}(\widehat{Z}^{r_k}_k)/k^d>[2\beta(1+\varepsilon)]^{d/2}\omega_d\right\},$$
it follows from setting $\gamma=\sqrt{2\beta(1+\varepsilon)}$ in \eqref{eq52} that there exists a positive constant $c(\varepsilon)$ such that $P(A_k)\leq e^{-\beta c(\varepsilon)k}$ for all large $k$. Applying Borel-Cantelli lemma on the events $(A_k:k\geq 1)$, and then choosing $\varepsilon=1/n$, and finally letting $n$ vary over $\mathbb{N}$ yields: with probability one,   
\begin{equation}
\underset{t\rightarrow\infty}{\limsup}\:\frac{\textsf{vol}(\widehat{Z}^{r_t}_t)}{t^d}\leq (2\beta)^{d/2}\omega_d. \label{eq40}
\end{equation}

The proofs below will `close the gap' between $[2\beta(1-kd)]^{d/2}\omega_d$ in \eqref{eq51} and $(2\beta)^{d/2}\omega_d$ in \eqref{eq40} for $d=2$ and $d\geq 3$, separately. When $k=0$, observe that the lower bound in \eqref{eq51} coincides with the upper bound in \eqref{eq40}, so there is nothing more to prove. Therefore, in what follows, we suppose that $k>0$.

\subsection{Proof of Theorem~\ref{theorem1}}

Note that $\omega_2=\pi$. The upper bound comes from \eqref{eq40}. We will show that for every $\varepsilon>0$ there exists a positive constant $c_1$ such that for all large $t$, 
\begin{equation} 
P\left(\frac{\textsf{vol}\left(\widehat{Z}_t^{r_t}\right)}{t^2}\leq 2\pi\beta(1-\varepsilon)\right)\leq e^{-c_1 t}. \label{eq301}
\end{equation}
Then, the lower bound for Theorem~\ref{theorem1} will follow from \eqref{eq301} via a standard Borel-Cantelli argument.

To prove \eqref{eq301}, we choose a well-spaced net of points in the subcritical zone, and argue that for large $t$ with overwhelming probability, each ball of radius one centered at a net point has sufficiently many particles at time $t-1$ so that even simple Brownian motions initiated (rather than sub-BBMs) from the positions of these particles at time $t-1$ are enough to ensure that there is no ball of radius $r_t$ with center lying in the subcritical zone that remains not hit over the period $[t-1,t]$. In other words, the following occurs with overwhelming probability: over $[0,t-1]$, the system produces sufficiently many particles which are sufficiently well-spaced over the subcritical zone at time $t-1$, and then (neglecting the branching over $[t-1,t]$), Wiener sausages initiated from the positions of the particles at this time are enough to cover the entire subcritical zone. 

In this subsection, $d=2$. However, in some of the notation and arguments that follow, we prefer to keep $d$ general as they will be used in the next subsection as well, where $d\geq 3$.     

Let $\varepsilon>0$, and for $t>0$ let $\rho_t:=\sqrt{2\beta(1-\varepsilon)}t$ and $\mathbf{B}_t:=B(0,\rho_t)$. For $t>0$ define
$$m_t:=\left\lceil \frac{\rho_t}{1/(2\sqrt{d})} \right\rceil^d,
\quad n_t:= \left\lceil \frac{\rho_t}{r_t/(2\sqrt{d})} \right\rceil^d.$$
Then, $n_t=\left\lceil c_2\,t/r_t\right\rceil^d$ for some $c_2=c_2(\varepsilon,\beta,d)$. For $t>0$, define the events
$$A_t:=\left\{\textsf{vol}\left(\widehat{Z}_t^{r_t}\right)/t^2\leq 2\pi\beta(1-\varepsilon)\right\} .$$ 

First, we prepare the setting at time $t-1$. Let $C(0,\rho_t)$ be the cube centered at the origin with side length $2\rho_t$ so that $B(0,\rho_t)$ is inscribed in $C(0,\rho_t)$. Consider the simple cubic packing of $C(0,\rho_t)$ with balls of radius $1/(2\sqrt{d})$. Then, at most $m_t$ balls are needed to completely pack $C(0,\rho_t)$, say with centers $(x_j:1\leq j\leq m_t)$. For each $j$, let $B_j=B(x_j,1/(2\sqrt{d}))$. (We suppress the $t$-dependence in $x_j$ and $B_j$ for ease of notation.) In a simple cubic packing, the distance between a point in space and the farthest point of the packing ball that is closest to that point, is less than the distance between the center and any vertex of a $d$-dimensional cube with side length four times the radius of a packing ball. Then, since the distance between the center and any vertex of the $d$-dimensional unit cube is $\sqrt{d}/2$, it follows that
\begin{equation}
\forall\,x\,\in \mathbf{B}_t, \quad
\underset{1\leq j\leq m_t}{\min}\:\underset{z\in B_j}{\max}\:|x-z|<1. \label{eq41}
\end{equation}
For a Borel set $B\subseteq\mathbb{R}^d$ and $t\geq 0$, we write $Z_t(B)$ to denote the number of particles, i.e., the \emph{mass}, of $Z$ that fall inside $B$ at time $t$. For $j\in\{1,2,\ldots,m_t\}$, define the events
\begin{equation}
E_j:=\left\{Z_{t-1}(B_j)<e^{\beta (\varepsilon/2)t}\right\}. \label{events}
\end{equation}
Typically, the mass of BBM that fall inside a linearly moving ball of fixed radius $a>0$, say $B_t:=B(\theta\sqrt{2\beta}t\mathbf{e},a)$ for some unit vector $\mathbf{e}$ and $0<\theta<1$, is $\exp[\beta(1-\theta^2)t+o(t)]$. Moreover, \cite[Thm.\ 1]{M2018} says that for $0\leq a<1-\theta^2$, 
$$ \underset{t\rightarrow\infty}{\lim}\frac{1}{t}\log P\left(Z_t(B_t)<e^{\beta at}\right)=-\beta\times I$$   
for some positive rate function $I=I(\theta,a)$. Then, since $x_j\in B(0,\sqrt{2\beta(1-\varepsilon)}t)$ for each $j$, $\varepsilon/2$ is an atypically small exponent (typical exponent is at least $1-(\sqrt{1-\varepsilon})^2=\varepsilon$) for the mass of BBM in each $B_j$ at time $t-1$. It follows from \cite[Thm.\ 1]{M2018} that there exists a positive constant $c(\varepsilon)$ such that for all large $t$,
\begin{equation}
P\left(\bigcup_{1\leq j\leq m_t}E_j\right)\leq m_t e^{-c(\varepsilon) t}=e^{-c(\varepsilon) t+o(t)}, \label{eq42}
\end{equation}
where we have used the union bound and that $m_t$ is only a polynomial factor in $t$. It follows from \eqref{eq41}, \eqref{events} and \eqref{eq42} that at time $t-1$, with overwhelming probability, there are at least $e^{\beta (\varepsilon/2)t}$ particles in the $1$-neighborhood of each point in $\mathbf{B}_t$. That is, there exists $c=c(\varepsilon)>0$ such that for all large $t$,
\begin{equation}
P(G_t)\leq e^{-ct},\quad G_t:=\left\{\underset{x\in \mathbf{B}_t}{\inf}Z_{t-1}\left(B(x,1)\right)<e^{\beta(\varepsilon/2)t}\right\}. \label{eq43}
\end{equation}

Now consider the simple cubic packing of $C(0,\rho_t)$ with balls of radius $r_t/(2\sqrt{d})$. Then, at most $n_t$ balls are needed to completely pack $C(0,\rho_t)$, say with centers $(y_j:1\leq j\leq n_t)$. For each $j$, let $\widehat{B}_j=B(y_j,r_t/(2\sqrt{d}))$. By an argument similar to the one leading to \eqref{eq41}, it follows that 
\begin{equation}
\forall\,x\,\in \mathbf{B}_t, \quad
\underset{1\leq j\leq n_t}{\min}\:\underset{z\in \widehat{B}_j}{\max}\:|x-z|<r_t. \label{eq44}
\end{equation}
For $j\in\{1,2,\ldots,n_t\}$, define the events
$$F_j:=\{Z_s(\widehat{B}_j)=0\:\:\forall s\in[t-1,t] \} .$$ 
For $0\le t_1\le t_2$, let 
$$R(t_1,t_2):=\bigcup_{s=t_{1}}^{t_{2}} \mathrm{supp} (Z(s)),$$ 
that is, $R(t_1,t_2)$ is the accumulated support of $Z$ over $[t_1,t_2]$, and let  
$$\widehat{Z}_{[t_1,t_2]}^r:=\bigcup_{x\in R(t_1,t_2)}B(x,r) $$ 
be the corresponding sausage over $[t_1,t_2]$ with radius $r$. For $t>0$, define the events 
$$H_t:=\left\{\exists\:x\in \mathbf{B}_t\:\:\text{such that}\:\:x\notin \widehat{Z}_{[t-1,t]}^{r_t}\right\} .$$
Then, it follows from \eqref{eq44} that $H_t\subseteq\bigcup_{1\leq j\leq n_t}F_j$, and therefore 
$$P(H_t)\leq P\left(\bigcup_{1\leq j\leq n_t}F_j\right),$$
which holds even conditional on the event $G_t^c$. The union bound gives
\begin{equation}
P(H_t \mid G_t^c)\leq n_t P(F_k \mid G_t^c), \label{eq45}
\end{equation}
where $k$ is chosen so as to satisfy $P(F_k \mid G_t^c)=\underset{1\leq j\leq n_t}{\max}P(F_j \mid G_t^c)$. In view of \eqref{eq43}, \eqref{eq45}, and the estimate 
\begin{equation}P(A_t)\leq P(H_t)\leq P(H_t \mid G_t^c)+P(G_t), \nonumber
\end{equation}
and since $n_t$ is only an exponential factor in $t$, to complete the proof of \eqref{eq301}, it suffices to show that $P(F_k \mid G_t^c)$ is super-exponentially small in $t$ for large $t$.

Observe that conditional on the event $G_t^c$, the event $F_k$ can be realized only if the sub-BBMs initiated by each of the at least $\exp[\beta(\varepsilon/2)t]$ many particles present in $B(y_k,1)$ at time $t-1$ does not hit $\widehat{B}_k=B(y_k,r_t/(2\sqrt{d}))$ in the remaining time interval $[t-1,t]$. Apply the Markov property at time $t-1$, and neglect possible branching of particles over $[t-1,t]$ for an upper bound on $P(F_k \mid G_t^c)$. Then, by \eqref{hitting2} in Lemma~\ref{lemma2}, and the independence of particles present at time $t-1$, there exists $c>0$ such that for all large $t$,
\begin{equation}
P(F_k \mid G_t^c)\leq \left(1-\mathbf{P}_{\mathbf{e}}\left(\underset{0\leq s\leq 1}{\min}|X(s)|<\frac{r_t}{2\sqrt{d}}\right)\right)^{e^{\beta(\varepsilon/2)t}} \leq \left(1-\frac{c}{\beta kt}\right)^{e^{\beta(\varepsilon/2)t}}, \label{eq46}
\end{equation}
where $\mathbf{e}$ is a unit vector. It is clear that the right-hand side of \eqref{eq46} is super-exponentially small in $t$ for large $t$. This completes the proof of Theorem~\ref{theorem1}.

\subsection{Proof of Theorem~\ref{theorem2}}

We will show that for every $\varepsilon>0$ there exist positive constants $c_1$ and $c_2$ such that for all large $t$, 
\begin{equation} 
P\left(\frac{\textsf{vol}\left(\widehat{Z}_t^{r_t}\right)}{t^d}\leq [2\beta(1-k(d-2)-\varepsilon)]^{d/2}\omega_d\right)\leq e^{-c_1 t} ,\label{eq401}
\end{equation}
and
\begin{equation} 
P\left(\frac{\textsf{vol}\left(\widehat{Z}_t^{r_t}\right)}{t^d}\geq [2\beta(1-k(d-2)+\varepsilon)]^{d/2}\omega_d\right)\leq e^{-c_2 t}.\label{eq402}
\end{equation}
Then, Theorem~\ref{theorem2} will follow from \eqref{eq401} and \eqref{eq402} via a standard Borel-Cantelli argument.

The method of proof of \eqref{eq401} is identical to that of \eqref{eq301}. We only need to make the following changes. For $t>0$, let $\rho_t:=\sqrt{2\beta(1-k(d-2)-\varepsilon)}t$ and $\mathbf{B}_t:=B(0,\rho_t)$, and define the events $\left(A_t:t\geq 0\right)$ and $\left(E_j:j=1,2,\ldots,m_t\right)$ as
$$A_t:=\left\{\textsf{vol}(\widehat{Z}_t^{r_t})/t^d\leq [2\beta(1-k(d-2)-\varepsilon)]^{d/2}\omega_d \right\},\quad  E_j:=\left\{Z_{t-1}(B_j)<e^{\beta (k(d-2)+\varepsilon/2)t}\right\}.$$
Then, since $k(d-2)+\varepsilon/2$ is an atypically small exponent (typical exponent is at least $k(d-2)+\varepsilon$) for the mass in each $B_j$ at time $t-1$, by \cite[Thm.\ 1]{M2018}, there exists a positive constant $c(\varepsilon)$ such that for all large $t$, 
\begin{equation}
P\left(\bigcup_{1\leq j\leq m_t}E_j\right)\leq m_t e^{-c(\varepsilon) t}=e^{-c(\varepsilon) t+o(t)}. \nonumber
\end{equation}
The rest of the proof is identical to that of \eqref{eq301} except the last part, where we need to show that $P(F_k \mid G_t^c)$ is super-exponentially small in $t$ for large $t$. By \eqref{hitting3} in Lemma~\ref{lemma2}, and the independence of particles present at time $t-1$, there exists $c>0$ such that for all large $t$,
\begin{equation}
P(F_k \mid G_t^c)\leq \left(1-\mathbf{P}_{\mathbf{e}}\left(\underset{0\leq s\leq 1}{\min}|X(s)|<\frac{r_t}{2\sqrt{d}}\right)\right)^{e^{\beta (k(d-2)+\varepsilon/2)t}} \leq \left(1-ce^{-\beta k(d-2)t}\right)^{e^{\beta (k(d-2)+\varepsilon/2)t}}. \label{eq62}
\end{equation}
Using the inequality $1+x\leq e^x$, the right-hand side of \eqref{eq62} can be bounded from above by $\exp[-c\,e^{\beta(\varepsilon/2)t}]$, which is super-exponentially small in $t$ for large $t$. This completes the proof of \eqref{eq401}.

To prove \eqref{eq402}, for $0\leq\theta<1$, let $\mathcal{N}_t^\theta$ be the subset of $\mathcal{N}_t$ consisting of particles whose ancestral lines up to $t$ has exited $B_\theta:=B(0,\theta\sqrt{2\beta}t)$ at some point over $[0,t]$, and let $N_t^\theta:=|\mathcal{N}_t^\theta|$. Using a many-to-one formula, we have
\begin{equation} 
E[N_t^\theta]=E[N_t] \mathbf{P}_0\left(\underset{0\leq s\leq t}{\sup}|X(s)|>\theta\sqrt{2\beta}t\right). \label{eq304}
\end{equation}
Since $E[N_t]=e^{\beta t}$, Proposition A and \eqref{eq304} imply that 
\begin{equation} 
E[N_t^\theta]= e^{\beta(1-\theta^2)t+o(t)}. \label{eq306}
\end{equation}
Recall that $r_t=r_0 e^{-\beta kt}$. Each Brownian trajectory up to time $t$ traces out an associated $r_t$-sausage, with expected volume of $\kappa_{r_0} t\exp[-(d-2)\beta kt](1+o(1))$ by Lemma~\ref{lemma1}. Order the particles in $\mathcal{N}_t^\theta$, with respect to their birth times for instance, and let $\textsf{vol}(j)$ be the volume scooped out by the $r_t$-shrinking Wiener sausage associated to the ancestral line of $j$th particle in $\mathcal{N}_t^\theta$ over the time period $[\tau_j,t]$, where $\tau_j$ is the first time the ancestral line of that particle has exited $B_\theta$. Let $\widehat{Z}_t^{r_t,\theta}$ be the part of the sausage $\widehat{Z}_t^{r_t}$ that is outside $B_\theta$, that is, 
$\widehat{Z}_t^{r_t,\theta}:=\widehat{Z}_t^{r_t}\cap B_\theta^c$,
where $B_\theta^c$ denotes the complement of $B_\theta$ in $\mathbb{R}^d$. Then, by Lemma~\ref{lemma1} and \eqref{eq306},
\begin{align}
E\left[\textsf{vol}\left(\widehat{Z}_t^{r_t,\theta}\right)\right]&\leq \sum_{k=1}^\infty E[\textsf{vol}(1)+\ldots+\textsf{vol}(N_t^\theta)\mid N_t^\theta=k]P(N_t^\theta=k) \nonumber \\
&= \sum_{k=1}^\infty k\,P(N_t^\theta=k)\,E[\textsf{vol}(1)] \nonumber \\
&\leq E[N_t^\theta]\,\kappa_{r_0} te^{-(d-2)\beta kt}(1+o(1))=e^{\beta t(1-\theta^2-k(d-2)) +o(t)}. \label{eq307}
\end{align} 
To bound $E[\textsf{vol}(1)]$, we have applied the strong Markov property to the ancestral line of first particle of $\mathcal{N}_t^\theta$ at time $\tau_1$, which implies that $\textsf{vol}(1)$ is identically distributed as $\textsf{vol}(\cup_{0\leq s\leq t-\tau_1}B(X(s),r_t))$ given $\tau_1$. Let $\theta_1=\sqrt{1-k(d-2)+\varepsilon/2}$. It follows from the Markov inequality and \eqref{eq307} that
\begin{equation}
P(\textsf{vol}(\widehat{Z}_t^{r_t,\theta_1})\geq 1)\leq e^{-\beta \varepsilon t/2+o(t)}. \label{eq308}
\end{equation}
Observe that $\textsf{vol}(\widehat{Z}_t^{r_t})\leq \textsf{vol}(\widehat{Z}_t^{r_t,\theta_1})+(\theta_1\sqrt{2\beta}t+r_t)^d\omega_d< [2\beta(1-k(d-2)+\varepsilon)]^{d/2}\omega_d t^d$ for all large $t$ conditional on the event $\{\textsf{vol}(\widehat{Z}_t^{r_t,\theta_1})<1\}$. This, along with \eqref{eq308}, implies \eqref{eq402}, and hence completes the proof of Theorem~\ref{theorem2}.

\bibliographystyle{plain}

\end{document}